\newcommand \codim {\operatorname{codim}}
\newcommand\spa{\operatorname{span}}
\newcommand\R{\mathbb{R}}
\begin{document}

\newtheorem{thm}{Theorem}
\newtheorem{prop}{Proposition}
\newtheorem*{conjecture}{Conjecture}
\newtheorem{lemma}{Lemma}

\title[Convolution of Singular Measures and Brascamp-Lieb Inequalities]{Convolution Estimates for Singular Measures and\\ Some Global Nonlinear Brascamp-Lieb Inequalities}

\author{Herbert Koch}
\address{Mathematisches Institut, Universit\"at Bonn, Endenicher Allee 60, 53115 Bonn, Germany}
\email{koch@math.uni-bonn.de}

\author{Stefan Steinerberger}
\address{Department of Mathematics, Yale University, 10 Hillhouse Avenue, CT 06511, USA}
\email{stefan.steinerberger@yale.edu}

\begin{abstract}
We give a $L^2\times L^2 \rightarrow L^2$ convolution estimate for singular measures supported on transversal hypersurfaces in $\mathbb{R}^n$, which improves earlier results of Bejenaru, Herr \& Tataru
as well as Bejenaru \& Herr. The arising quantities are relevant in the study of the validity of bilinear 
estimates for dispersive partial differential equations. We also prove a class of global, nonlinear Brascamp-Lieb inequalities with explicit constants in the same spirit. 
\end{abstract}

\maketitle

\section{Introduction}
\subsection{Loomis-Whitney.} The classical Loomis-Whitney inequality \cite{LW} bounds the $n-$dimensional volume of an open subset of $\mathbb{R}^n$
in terms of the size of its projections onto $(n-1)$-dimensions, one formulation is as follows: if the projections $\pi_{j}:\mathbb{R}^n \rightarrow \mathbb{R}^{n-1}$ are given by omitting
the $j-$th component $\pi_{j}(x) = (x_{1}, \dots, x_{j-1}, x_{j+1}, \dots, x_{n})$, then
$$ \int_{\mathbb{R}^{n}}{ f_{1}(\pi_{1}(x))\dots f_{n}(\pi_{n}(x)) dx} \leq \|f_{1}\|_{L^{n-1}(\mathbb{R}^{n-1})}\dots \|f_{n}\|_{L^{n-1}(\mathbb{R}^{n-1})}$$
for all $f_{j} \in L^{n-1}(\mathbb{R}^{n-1})$. The isoperimetric inequality is an immediate consequence: given a bounded domain $\Omega \subset \mathbb{R}^n$ and letting $f_{j}$ be the characteristic function
of $\pi_{j}(\Omega)$, then any reasonable definition of surface measure satisfies
$$ \|f_{j}\|_{L^{n-1}(\mathbb{R}^n)}^{n-1} \leq |\partial \Omega | $$ 
and thus the Loomis-Whitney inequality implies
$$ |\Omega| = \int_{\mathbb{R}^{n}}{ f_{1}(\pi_{1}(x))\dots f_{n}(\pi_{n}(x)) dx} \leq \|f_{1}\|_{L^{n-1}(\mathbb{R}^{n-1})} \dots \|f_{n}\|_{L^{n-1}(\mathbb{R}^{n-1})} \leq |\partial \Omega|^{\frac{n}{n-1}}.$$
The proof of the Loomis-Whitney inequality combines an elementary combinatorial setup
with induction on dimension, where each induction step is an
application of H\"olders inequality.  A non-linear and local form of
this result was given by Bennett, Carbery \& Wright \cite{BCW}; if the
$\pi_{j}$ are submersions with the kernels of $d\pi_{j}$ spanning the
whole of $\mathbb{R}^n$, then the nonlinear Loomis-Whitney inequality
still holds locally (with appropriate cut-off functions and
constants).

\subsection{Convolution inequalities.} A variant in $\mathbb{R}^3$
arises in the Fourier analysis of nonlinear dispersive equations and
is due to Bejenaru, Herr \& Tataru \cite{BHT}.  Let $\Sigma_{1},
\Sigma_{2}, \Sigma_{3} \subset \mathbb{R}^3$ be three surfaces with
$C^{1+\alpha}$ regularity, bounded in diameter by 1 and uniformly
transversal in the sense that for any three points $x_{i} \in
\Sigma_{i}$ the associated normal vectors $\nu_{i}$ satisfy
$$ |\det(\nu_{1},\nu_{2},\nu_{3})| \geq \frac{1}{2}.$$
Writing $\mu_{\Sigma_i}$ for the two-dimensional Hausdorff measure $\mathcal{H}^2$ restricted to $\Sigma_i$, we identify $f_{1} \in L^2(\Sigma_{1},\mu_{\Sigma_1})$ with the distribution
$$ f_1(\psi) = \int_{\Sigma_1}{f_1(x)\psi(x) d\mu_{\Sigma_1}}(x) \quad \psi \in \mathcal{D}(\R^3)$$
and analogously for $f_2 \in L^2(\Sigma_2, \mu_{\Sigma_2})$, which allows us to define the convolution as
$$ (f_1*f_2)(\psi) = \int_{\Sigma_1}{\int_{\Sigma_2}{f_1(x)f_2(y)\psi(x+y)d\mu_{\Sigma_1}(x)}d\mu_{\Sigma_2}(y)}.$$
Here and below, we can always assume all arising functions to be nonnegative; if the inequalities hold for nonnegative functions,
then they also hold for all measurable functions.

Thickening the surfaces one sees by the coarea formula that 
\[ (f_1*f_2)(z) =   \int_{\Sigma_1\cap (z-\Sigma_2)}  \gamma_3^{-1}(x,z-x)   f_1(x)f_2(z-x) d\mathcal{H}^{1}  \] 
where $\gamma_3(x,y)$ is  the cosine  of the  angle between $\nu_1(x)$ and 
$\nu_2(y)$ for $x \in \Sigma_1$ and $y \in \Sigma_2$. Bejenaru, Herr \& Tataru then show
that the restriction is well-defined in
$L^2(\Sigma_3,d\mu_{\Sigma_3})$ and that it satisfies
$$ \|f_{1}*f_{2}\|_{L^2(\Sigma_{3})} \lesssim \|f_{1}\|_{L^2(\Sigma_{1})} \|f_{2}\|_{L^2(\Sigma_{2})}.$$

The behavior under linear transformations yields additional
information: one should be able to weaken the assumption on the
transversality to $ |\det(\nu_{1},\nu_{2},\nu_{3})| \geq \gamma > 0$ at the
cost of increasing the implicit constant by a factor of order
$\gamma^{-\frac12}$ and, indeed, this is done in \cite{BHT} at the
cost of assuming certain conditions on diameter, H\"older exponent and
H\"older norm. A dual formulation is achieved by introducing a weight
function $f_3 \in L^2(\Sigma_{3})$ and rewriting
$$ \int_{\Sigma_3}{(f_1*f_2)(x)f_3(-x)dx} = (f_1*f_2*f_3)(0).$$
Considering thickened surfaces $\Sigma_{i}^{*} =
\Sigma_{i}+B(0,\varepsilon)$, where $B(0,\varepsilon)$ is the ball of
radius $\varepsilon$ in $\mathbb{R}^3$, and assuming $f_1 \in
L^2(\Sigma_{1}^{*})$, $f_2 \in L^2(\Sigma_{2}^{*})$ and $f_3 \in
L^2(\Sigma_{3}^{*})$, then ignoring all underlying geometry implies
with H\"older that
\begin{align*}
 |(f_1*f_2*f_3)(0)|  &= \left|\int_{\mathbb{R}^3} \mathcal{F}(f_1*f_2*f_3)(x)dx \right| \\
 &= \left| \int_{\mathbb{R}^3}{\hat{f_1}(\xi)\hat{f_2}(\xi)\hat{f_3}(\xi)d\xi}\right| \leq \|\hat{f_1}\|_{L^3}\|\hat{f_2}\|_{L^3}\|\hat{f_3}\|_{L^3}
\end{align*}
where $\mathcal{F}$ and $\hat{}$ denote the Fourier transform. 
One way of looking at the Bejenaru-Herr-Tataru statement is that for
$\varepsilon \rightarrow 0$ the transversal structure of the Fourier
supports implies additional cancellation and allows us to conclude
$$ \int_{\mathbb{R}^3}{\hat{f_1}(\xi)\hat{f_2}(\xi)\hat{f_3}(\xi)dx} \lesssim \|\hat{f_1}\|_{L^2}\|\hat{f_2}\|_{L^2}\|\hat{f_3}\|_{L^2}.$$

The proof given by Bejenaru, Herr \& Tataru is quite remarkable: it
uses induction on scales \`{a} la Wolff and has inspired recent work
by Bennett \& Bez \cite{BB} on Brascamp-Lieb inequalities.  The work
of Bennett \& Bez was then used by Bejenaru \& Herr \cite{BH} to extend
\cite{BHT} to arbitrary dimensions under the natural scaling condition
of the codimensions adding up to the space dimension. All three 
papers treat the nonlinearity in a perturbative fashion.

\subsection{Applicability.} Results of this type are related to
(multilinear) restriction problems and a bilinear estimates for
partial differential equations of dispersive type, where the type of
nontrivial interaction of two characteristic hypersurfaces $\Sigma_1 +
\Sigma_2$ with a third hypersurface $\Sigma_3$ determines whether a
bilinear estimate is available. Bejenaru \& Herr \cite{BH}, for
example, use their generalized version of the Bejenaru-Herr-Tataru
result to obtain locally well-posedness for the 3D Zakharov system in
the full subcritical regime.

\section{Statement of results} 
\subsection{The simplest case.} We are interested in general
convolution inequalities for curved submanifolds of
$\mathbb{R}^n$. The simplest case, taken from \cite{BHT}, is given by
three transversal hyperplanes in $\mathbb{R}^3$ equipped with the
two-dimensional Hausdorff measure $\mathcal{H}^2$
\begin{align*}
\Sigma_1 &= \left\{(x,y,z) \in \R^3: x=0\right\}  \\
 \Sigma_2 &= \left\{(x,y,z) \in \R^3: y=0\right\}   \\
\Sigma_3 &= \left\{(x,y,z) \in \R^3: z=0\right\}
\end{align*}
and smooth functions $f \in L^2(\Sigma_1,\mathcal{H}^2),g \in
L^2(\Sigma_2,\mathcal{H}^2)$ and $h \in
L^2(\Sigma_3,\mathcal{H}^2)$. The convolution can be written down in
an explicit fashion
$$ (f_{}*g_{})(x,y,z) := \int{f(y,z')g(x,z-z')dz'}$$
and then duality yields that the estimate
$$ \|f * g\|_{L^2(\Sigma_3,\mathcal{H}^2)} \leq \|f\|_{L^2(\Sigma_1,\mathcal{H}^2)}\|g\|_{L^2(\Sigma_2,\mathcal{H}^2)}$$
is equivalent to the estimate
$$ \left| \int{f_{}(y,z) g(x,-z) h(x,y) dx dy dz} \right| \leq \|f\|_{L^2(\Sigma_1,\mathcal{H}^2)}\|g\|_{L^2(\Sigma_2,\mathcal{H}^2)}\|h\|_{L^2(\Sigma_3,\mathcal{H}^2)},$$
which in itself is simply the three-dimensional Loomis-Whitney
inequality. Note that the affine structure of the hyperplanes is
crucial for the proof to work as it allows for an explicit parametrization
of the integration fibers: in particular, this proof is not stable under small
perturbations of the underlying surfaces.

\subsection{Setup.} Our general setup is as follows. Let $\Sigma_i$ ($i =1,2,3$) be $n_i-$dimensional Lipschitz manifolds 
in $\mathbb{R}^n$ with codimensions adding up to the space dimension, i.e.
\[   \sum_{i=1}^3 (n-n_i) = n \qquad \mbox{or} \qquad n_1 + n_2 + n_3 = 2n.  \]
This condition will be necessitated by scaling.
Let $\mu_{\Sigma_i}$ be the $n_i-$dimensional Hausdorff measure restricted to $\Sigma_i$. We associate with every $f \in L^1_{loc}( \Sigma_i, \mu_i)$ the 
signed measure $f\mu_{\Sigma_i}$.  For every $i \in \left\{1,2,3\right\}$, there exists 
an orthonormal basis $\nu_{i,j}$, $1\le j \le n-n_i$, of normal vectors at every point of $\Sigma_i$ almost everywhere. Since $(n-n_1) + (n-n_2) +(n-n_3) =n$, this means that for every $(x,y,z) \in \Sigma_1 \times
\Sigma_2 \times \Sigma_3$, we can define the matrix $N(x,y,z)$ by collecting the $n$
vectors $\nu_{i,j}$ as columns,
\[ N(x,y,z) = \left( \nu_{i.j} \right) \]
The natural measure of transversality $\gamma$ is then defined by
$$ \Sigma_{1} \times \Sigma_{2} \times \Sigma_{3} \ni (x,y,z) \to \gamma(x,y,z) = |\det(N(x,y,z))|.$$
We call the surfaces {\em transversal} if this local measure of transversality is uniformly bounded
from below $\gamma \geq \gamma_0 > 0$ for all $(x,y,z) \in \Sigma_1 \times \Sigma_2 \times \Sigma_3$, whenever all three normal vectors are defined.

\subsection{A convolution inequality.} Our first result is a  global version of \cite{BH} without the requirements on H\"older continuity, H\"older norms or bounds on the diameter -- furthermore we are able
to give an explicit constant.
\begin{thm}[Convolution inequality] \label{convolution} 
Suppose that the $\Sigma_i$ are given as the graphs of Lipschitz functions and that 
$$\gamma(x,y,z) \ge \gamma_0.$$ 
 Then, for all $f_1 \in L^2(\Sigma_1), f_2 \in L^2(\Sigma_2)$, we have
$$  \|f_{1}*f_{2}\|_{L^2(\Sigma_{3})}  \le  \gamma_0^{-\frac{3}{2}} \Vert f_1 \Vert_{L^2(\Sigma_1)} \Vert f_2 \Vert_{L^2(\Sigma_2)}.$$
\end{thm}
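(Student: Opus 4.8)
The plan is to dualise. Testing $f_1*f_2$ against a function $f_3\in L^2(\Sigma_3)$ and using the identity $\int_{\Sigma_3}(f_1*f_2)(z)f_3(-z)\,d\mu_{\Sigma_3}(z)=(f_1*f_2*f_3)(0)$ recorded above, the claimed bound is equivalent to the symmetric trilinear estimate
\[ \left|\int_{\Sigma_1\times\Sigma_2\times\Sigma_3} f_1(x)f_2(y)f_3(z)\,\delta(x+y+z)\,d(\mu_{\Sigma_1}\otimes\mu_{\Sigma_2}\otimes\mu_{\Sigma_3})\right|\le \gamma_0^{-\frac32}\prod_{i=1}^3\|f_i\|_{L^2(\Sigma_i)}, \]
the reflection $\Sigma_3\mapsto-\Sigma_3$ being harmless since $\gamma$ is reflection invariant. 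I would make the delta precise through the coarea formula for the (smooth) addition map $G(x,y,z)=x+y+z$ restricted to the $2n$-dimensional product $\Sigma_1\times\Sigma_2\times\Sigma_3$: writing $d_i=n-n_i$, the balance $d_1+d_2+d_3=n$ together with $\gamma\ge\gamma_0>0$ forces the normal spaces to be in direct sum, so $dG$ is surjective almost everywhere, the constraint set $M=G^{-1}(0)$ is an $n$-dimensional Lipschitz manifold, and the trilinear form equals $\int_M f_1(x)f_2(y)f_3(z)\,(\mathcal J G)^{-1}\,d\mathcal H^{n}$, where $\mathcal J G$ is the coarea (normal Jacobian) factor of $G$, defined $\mathcal H^n$-almost everywhere on $M$.

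On $M$ the integrand factors as $f_1(x)f_2(y)f_3(z)$ through the three projections $p_i:M\to\Sigma_i$, and the scaling $n_1+n_2+n_3=2n$ is exactly the balance that makes an $L^2$ Loomis--Whitney argument close. In the model case of linear subspaces one diagonalises: using the block coordinates coming from the splitting $\R^n\cong V_1^\perp\oplus V_2^\perp\oplus V_3^\perp$ (here $V_i=T\Sigma_i$), each $f_i$ becomes a function of the two blocks complementary to $V_i^\perp$, and the estimate is the classical Loomis--Whitney inequality with multidimensional slots, proved by freezing one block, applying the Cauchy--Schwarz inequality in the remaining two, and then Cauchy--Schwarz in the frozen block. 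I would run this same two-step scheme directly on the curved $M$, exploiting the graph hypothesis to produce a global fibration of $M$ over one factor whose fibres project onto the other two; the Lipschitz coarea formula justifies the iterated integration, and $L^2$ is precisely the exponent for which no further interpolation is needed.

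The constant is then purely geometric: it is the essential supremum over $M$ of a fixed ratio built from $\mathcal J G$ and the coarea factors of the projections $p_i$, all of which are determined pointwise by the configuration of the tangent spaces $V_i$, equivalently by the normal matrix $N$. The heart of the matter is thus a linear-algebra lemma expressing each of these Jacobians through $\det N$ and bounding the resulting product by $\gamma^{-3/2}\le\gamma_0^{-3/2}$. I expect this bookkeeping --- producing the exponent exactly $\tfrac32$ and controlling every factor by a single power of $\gamma_0^{-1}$ --- to be the \emph{main obstacle}, alongside the demand that everything hold globally and uniformly for merely Lipschitz surfaces. The Lipschitz regularity is handled softly: by Rademacher's theorem the normals $\nu_{i,j}$, hence $N$ and all Jacobians, exist $\mu_{\Sigma_i}$-almost everywhere, the coarea formula holds for Lipschitz manifolds, and $\gamma\ge\gamma_0$ is imposed wherever the normals are defined. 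This is what dispenses with the H\"older-continuity, H\"older-norm and diameter restrictions of \cite{BHT,BH}, whose arguments instead localise and treat the curvature perturbatively.
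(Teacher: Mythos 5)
Your reduction to a trilinear form over the incidence manifold $M=\{(x,y,z):x+y+z=0\}$ is fine as far as it goes, but the plan breaks exactly at the step you defer as ``bookkeeping.'' The constant in this theorem is \emph{not} an essential supremum over $M$ of pointwise Jacobian ratios. After the first Cauchy--Schwarz in the fiber $\Gamma_z=\Sigma_1\cap(z-\Sigma_2)$, the resulting expression is bilinear in $(f_1^2,f_2^2)$ but couples \emph{two different points} $x,x'$ of the same fiber: to recover $\Vert f_1\Vert_{L^2(\Sigma_1)}^2\Vert f_2\Vert_{L^2(\Sigma_2)}^2$ one must change variables $(z,x,x')\mapsto (x,\,z-x')\in\Sigma_1\times\Sigma_2$, and the Jacobian of that map involves normal data at $x\in\Sigma_1$ and $y=z-x'\in\Sigma_2$ with $x+y\neq z$ in general --- one only has $x+\tilde y=\tilde x+y=z$ for companion fiber points $\tilde x,\tilde y$. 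This is precisely the ``nonlocal interaction between locally defined linear maps'' that the paper invokes to explain why the exponent is $\tfrac32$ rather than the $\tfrac12$ of the linear case, and why the hypothesis must be transversality over all of $\Sigma_1\times\Sigma_2\times\Sigma_3$ (or at least the refined quadruple condition displayed after Theorem 1), not merely on the incidence set. Indeed, if your scheme closed with constants determined pointwise on $M$, you would have resolved the paper's open question (1) --- whether $\inf\{\gamma(x,y,z):x+y=z\}\geq\eta_0$ suffices --- which the authors state they cannot answer even for $\eta_0>1/2$.

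Relatedly, the ``global fibration of $M$ over one factor whose fibres project onto the other two'' that your two-step Cauchy--Schwarz requires does not exist for curved surfaces; it is exactly what the paper's mechanism replaces. Since the post--Cauchy--Schwarz expression is \emph{linear} in $f_1^2$ and $f_2^2$, the paper decomposes polyhedral approximants $\Sigma_1,\Sigma_2$ into faces with constant normals (refined so that each interaction lands in a single face of $\Sigma_3$), uses disjointness of supports and $L^2$-orthogonality to reduce to one pair of faces, bounds the weight via $\gamma_3\geq\gamma_0$ (costing a power of $\gamma_0^{-1}$ beyond the linear case), and then runs the linear argument built on the dual-basis parallelepiped identities $|(W_{i-1}W_{i+1})^t|=\gamma^{-1}$ and $|(V_{i-1}V_{i+1})^t|=\gamma\,|W_i^t|$ of Lemma~\ref{faces}, with bounds uniform in the number of faces so that the Lipschitz case follows by polyhedral approximation. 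Your soft appeal to Rademacher and the Lipschitz coarea formula handles the measure theory, but not this decoupling and localization, which is where the theorem's content --- and the specific power $\gamma_0^{-3/2}$ --- actually lives.
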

The constant does not have the form $\gamma_0^{-\frac{1}{2}}$ of the
case of linear hypersurfaces; this is due to the fact that in the
proof we are forced to use rough estimates to control a nonlocal
interaction between locally defined linear maps -- studying the factor
explicitely can give slightly better bounds in many cases. 

The following questions are natural but, to the best of our knowledge, open. 

\begin{enumerate} 
\item Suppose that 
\[ 
\inf\{ \gamma(x,y,z) : x\in \Sigma_1, y\in \Sigma_2, z \in \Sigma_3,
x+y=z \} \geq \eta_0 > 0. 
\]
Is there a constant $C=C(\eta_0)$ so that the convolution estimate holds with this constant? We do not even know the answer if $\eta_0$ is larger than $1/2$. 
\item Can one replace the factor $\gamma_0^{-3/2}$ by $C\gamma_0^{-1/2}$? 
Our proof shows  this to be the case if the submanifolds are controlled Lipschitz perturbations 
of linear subspaces. 
\end{enumerate} 

Our proof gives the following intermediate 
 refinement of Theorem 1. If we assume that
\[ 
\inf\{ \gamma(x,y,z) : x,\tilde x \in \Sigma_1, y,\tilde y \in
\Sigma_2, z \in \Sigma_3, x+\tilde y=\tilde x + y =z \} =: \gamma_0 > 0,
\]
then 
$$  \|f_{1}*f_{2}\|_{L^2(\Sigma_{3})}  \le \gamma_0^{-\frac{3}{2}} \Vert f_1 \Vert_{L^2(\Sigma_1)} \Vert f_2 \Vert_{L^2(\Sigma_2)}.
$$

\subsection{Nonlinear Brascamp-Lieb inequalities.} Our approach to the convolution problem is flexible
enough to allow us to deduce new global, nonlinear Brascamp-Lieb inequalities. We formulate the Brascamp-Lieb inequalities for the space $\R^n$ and three nonlinear mappings $\phi_i: \R^n \rightarrow \R^{n_i}$,
where again
$$n_1 + n_2 + n_3 = 2n.$$
Note that the problem now lies in the nonlinear fiber structure
induced by preimages $\phi_i^{-1}$.  We merely assume the $\phi_i$ to
be $C^1-$submersions (i.e. $D\phi_i$ has rank $n_i$). Let $N_i(x)$
be the null space of $D\phi_i(x)$ of dimension $n-n_i$.  We recall
that $$(n-n_1)+(n-n_2)+(n-n_3)=n$$ and assume that the nullspaces span $\R^n$
at every point.  By an abuse of notation we identify $N_i$ with a matrix 
having an orthonormal basis as columns. We introduce a measure of
transversality via
\[ \gamma_0 = \inf_{z \in \R^{n_3}} \inf_{\{x,y: \phi_3(x)= \phi_3(y) = z\} } 
  \sup_{O \in O(n) : ON_3(x) = N_3(y) } 
|\det (ON_1(x), N_2(y), N_3(y) ) |. 
\]  
This notion is inspired by the proof for the convolution, where such a definition
is required in order for the Brascamp-Lieb case to behave in an analogous fashion
as the convolution. To shorten the notation for the coarea formula we will henceforth write
for a matrix $A$
\[ |A| = (\det A A^T)^{1/2} \] 
Furthermore, we will use $\sigma_j(x)$ to denote the singular values of $D\phi_3(x)$, 
i.e. the square roots of the eigenvalues of $D\phi_3 D\phi_3^T$, ordered by their 
size, $\sigma_1 \le \sigma_2 \le \dots \le \sigma_{n_3}$.
Using this notation, we have in particular that
\[|D\phi_3(x)| = \prod_{j=1}^{n_3}\sigma_j \] 
and obtain for the operator norm of the linear mapping that
\[ \Vert D\phi_3(x)\Vert_{\ell^2 \rightarrow \ell^2} = \sigma_{n_3}. \]  
Since the $D\phi_i$ are assumed to always have maximal rank, we necessarily have $\sigma_1>0$. 
Introducing additional notation, we set
\[ \rho_{1}(x) =    |D\phi_3(x)|^{-\frac{n-n_2}{n_3}}  \prod_{j=n-n_2+1}^{n_3} \sigma_j \] 
as well as
\[ \rho_{2}(y)=  |D\phi_3(x)|^{-\frac{n-n_1}{n_3}} \prod_{j=n-n_1+1}^{n_3} \sigma_j. \] 
We observe that $n-n_1+n-n_2 = n_3 $.  Let 
\[ \rho=\sup_{z \in \R^{n_3}} \sup_{\{x,y: \phi_3(x)= \phi_3(y) = z \} } 
    \rho_1(x) \rho_2(y).  
\]  

\begin{thm}[Loomis-Whitney/Brascamp-Lieb] Under these assumptions, if 
$$\rho<\infty  \quad \mbox{and} \quad \gamma_0>0,$$ 
then for all $f_i \in L^2(\mathbb{R}^{n_i})$
$$   \int_{\R^n}  \left(\prod_{i=1}^3 |D\phi_1||D\phi_2||D\phi_3|\right)^{\frac12}
 f_1(\phi_1(x)) f_2(\phi_2(x)) f_3(\phi_3(x)) dx \le 
\sqrt{\frac{\rho}{\gamma_0}} \Vert f_1 \Vert_{L^2} \Vert f_2 \Vert_{L^2} \Vert f_3 \Vert_{L^2}.$$
\end{thm}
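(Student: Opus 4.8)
The plan is to reduce the trilinear form to a positive double integral over the fibre product of $\phi_3$ and then to resolve it by a single change of variables whose Jacobian is exactly the transversality determinant appearing in $\gamma_0$; this mirrors the argument behind Theorem \ref{convolution}. Write $I$ for the left hand side and $w=(|D\phi_1||D\phi_2||D\phi_3|)^{1/2}$ for the weight. First I would foliate $\R^n$ by the level sets $M_z=\phi_3^{-1}(z)$, which have dimension $n-n_3=(n-n_1)+(n-n_2)$, and apply the coarea formula to $\phi_3$: since $f_3\circ\phi_3$ is constant on each $M_z$,
\[ I=\int_{\R^{n_3}} f_3(z)\,K(z)\,dz,\qquad K(z)=\int_{M_z}\frac{w(x)}{|D\phi_3(x)|}\,f_1(\phi_1(x))f_2(\phi_2(x))\,d\mathcal H^{n-n_3}(x). \]
Cauchy--Schwarz in $z$ then gives $I\le \|f_3\|_{L^2}\big(\int_{\R^{n_3}}K(z)^2\,dz\big)^{1/2}$, so the whole theorem reduces to the bound $J:=\int_{\R^{n_3}}K(z)^2\,dz\le \frac{\rho}{\gamma_0}\|f_1\|_{L^2}^2\|f_2\|_{L^2}^2$.

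Next I would expand the square defining $J$ as an integral over the fibre product $\mathcal S=\{(x,y):\phi_3(x)=\phi_3(y)\}$, a manifold of dimension $2n-n_3=n_1+n_2$, equipped with the measure $d\tau$ obtained by integrating $d\mathcal H^{n-n_3}(x)\,d\mathcal H^{n-n_3}(y)$ over $M_z\times M_z$ against $dz$. The integrand contains the four evaluations $f_1(\phi_1(x)),f_2(\phi_2(x)),f_1(\phi_1(y)),f_2(\phi_2(y))$; a ``crossing'' Cauchy--Schwarz pairing $f_1(\phi_1(x))f_2(\phi_2(y))$ against $f_2(\phi_2(x))f_1(\phi_1(y))$, together with the symmetry of $d\tau$ and of the weight under $x\leftrightarrow y$, replaces them by $f_1(\phi_1(x))^2f_2(\phi_2(y))^2$ at no cost, so that
\[ J\le \int_{\mathcal S}\frac{w(x)\,w(y)}{|D\phi_3(x)|\,|D\phi_3(y)|}\,f_1(\phi_1(x))^2\,f_2(\phi_2(y))^2\,d\tau(x,y). \]

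The heart of the argument is the change of variables $T:(x,y)\mapsto(\phi_1(x),\phi_2(y))$ from $\mathcal S$ onto $\R^{n_1}\times\R^{n_2}$, a map between spaces of equal dimension $n_1+n_2$. Its differential sends $T_{(x,y)}\mathcal S=\{(u,v):D\phi_3(x)u=D\phi_3(y)v\}$ to $(D\phi_1(x)u,D\phi_2(y)v)$, and the hypothesis that $N_1,N_2,N_3$ span $\R^n$ forces the kernel to be trivial, so $T$ is a local diffeomorphism. Computing its Jacobian with respect to $d\tau$ is the delicate point: the tangent space to $\mathcal S$ couples the point $x$ to the point $y$ through the two \emph{distinct} differentials $D\phi_3(x)$ and $D\phi_3(y)$, and to evaluate the determinant one must first identify the two fibre directions $N_3(x)$ and $N_3(y)$ by an orthogonal map $O$ with $ON_3(x)=N_3(y)$. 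The intrinsic Jacobian is the value obtained for the optimal such alignment, namely $\sup_{O}|\det(ON_1(x),N_2(y),N_3(y))|$, and taking the infimum over admissible pairs $(x,y)$ shows it is bounded below by $\gamma_0$. Simultaneously, the geometric--mean weight $w$ is tailored so that, after dividing by this Jacobian and the coarea factors $|D\phi_3|^{-1}$, all dependence on $D\phi_1$ and $D\phi_2$ cancels and one is left with precisely the singular-value densities $\rho_1(x)\rho_2(y)$ of $D\phi_3$; thus the accumulated density is at most $\rho$. Feeding these two bounds into the area formula gives $J\le \frac{\rho}{\gamma_0}\|f_1\|_{L^2}^2\|f_2\|_{L^2}^2$, and combining with the Cauchy--Schwarz step yields the stated constant $\sqrt{\rho/\gamma_0}$.

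The main obstacle is exactly this Jacobian computation: carrying out the linear algebra that expresses $\mathcal J_\tau T$ through the aligned determinant $|\det(ON_1(x),N_2(y),N_3(y))|$ and the singular values $\sigma_1\le\dots\le\sigma_{n_3}$ of $D\phi_3$, and checking that the weight $w$ absorbs all remaining factors so that the residual density is genuinely $\rho_1(x)\rho_2(y)$. One must also verify that $T$ is essentially injective on $\mathcal S$, or otherwise control the number of preimages, so that the area formula introduces no multiplicity beyond the pointwise estimate $\rho/\gamma_0$; the hypotheses $\rho<\infty$ and $\gamma_0>0$, which make the optimal frame alignment available at every pair of fibre points, are precisely what guarantee this.
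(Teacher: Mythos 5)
Your outline reproduces the paper's first two moves faithfully: the coarea formula for $\phi_3$, Cauchy--Schwarz in $z$ to eliminate $f_3$, and a second Cauchy--Schwarz turning $K(z)^2$ into a decoupled integral of $f_1(\phi_1(x))^2 f_2(\phi_2(y))^2$ over the fibre product $\{\phi_3(x)=\phi_3(y)\}$ --- that is exactly the paper's reduction, and the dimension count $2(n-n_3)+n_3=n_1+n_2$ is the right reason to expect a change of variables onto $\R^{n_1}\times\R^{n_2}$. But the step you yourself flag as ``the main obstacle'' is the actual content of the theorem, and what you assert about it is not correct as stated. The pointwise Jacobian of $T:(x,y)\mapsto(\phi_1(x),\phi_2(y))$ relative to $d\tau$ is a determined quantity; it is not ``the value obtained for the optimal alignment'' $\sup_O|\det(ON_1(x),N_2(y),N_3(y))|$. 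In the paper the rotation $O$ enters as a free choice of coordinates at $x$ (harmless since it preserves all Gramians), after which one compares with a single mixed linear model $A_3$ that agrees with $A_3^x$ on the null space of $A_1$ and with $A_3^y$ on the null space of $A_2$; the factor $\rho_1(x)\rho_2(y)$ then arises as an \emph{upper bound} via a Hadamard-type estimate (the volume of the parallelepiped spanned by the image of $V_1$ under $A_3^x$ and of $V_2$ under $A_3^y$ is largest when these images are orthogonal), not as an exact cancellation. Relatedly, your symmetric crossing pairing leaves the weight $w(x)w(y)/\bigl(|D\phi_3(x)|\,|D\phi_3(y)|\bigr)$, which still carries the factors $|D\phi_2(x)|^{1/2}$ and $|D\phi_1(y)|^{1/2}$; these cannot cancel against a Jacobian that involves $D\phi_1$ only at $x$ and $D\phi_2$ only at $y$. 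You need the paper's asymmetric allocation, splitting $|D\phi_3|^{-\frac12}=|D\phi_3|^{-\frac{n-n_2}{2n_3}}|D\phi_3|^{-\frac{n-n_1}{2n_3}}$ and attaching all of $|D\phi_1|$ to $f_1^2$ and all of $|D\phi_2|$ to $f_2^2$ before applying Cauchy--Schwarz in the fibre.

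The second genuine gap is your appeal to the global area formula: this requires essential injectivity (or a multiplicity bound) for $T$ on the fibre product, and you claim this is ``precisely'' what $\rho<\infty$ and $\gamma_0>0$ guarantee; it is not --- those hypotheses only make $T$ a local diffeomorphism, and nothing in them rules out multiple preimages. The paper closes the argument in a structurally different way that never needs a global change of variables: since the expression obtained after the two Cauchy--Schwarz steps is \emph{bilinear in $f_1^2$ and $f_2^2$}, one may decompose $f_1$ and $f_2$ into pieces supported on small parallelepipeds (after reducing to piecewise linear $\phi_i$), verify the estimate for each pair of pieces by comparison with the already-proved linear Brascamp--Lieb proposition applied to $A_1$, $A_2$ and the mixed map $A_3$, and then sum, using disjointness of supports and $L^2$-orthogonality so that the right-hand side $\Vert f_1\Vert_{L^2}^2\Vert f_2\Vert_{L^2}^2$ also splits. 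To complete your route you would have to actually prove the Jacobian lower bound in terms of $\gamma_0$ and $\rho_1(x)\rho_2(y)$ and separately control the multiplicity of $T$; to follow the paper, replace the global area formula by this localization-plus-bilinearity argument, which implicitly constructs the map $T$ piece by piece without ever invoking its injectivity.
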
   

The geometric weights arise as compensation of two possible kinds of
symmetries: the geometric factor under the integral compensates for
the possibility of coordinate changes in the image. We would like to
obtain a formulation that is invariant under composing any of the
three mappings $\phi_i$ with an invertible linear mapping $A:\R^{n_i}
\rightarrow \R^{n_i}$, which introduces a Gramian determinant as
Jacobian -- this can happen in each image space independently which
implies the necessity of the product structure of the weight. We do
not quite achieve this desired invariance, since $\rho$ changes under
diffeomorphisms of $\R^{n_3}$. This observation leads to a trivial
improvement (minimizing over all possible diffeomorphisms) which we do
not explicitely formulate. In addition the transversality of the
kernels of $D\phi_i(x)$ has to enter the estimate. As for the
convolution our condition is not entirely local and the question
whether purely local transversality measures suffice, remains open.

\section{Proof of Linear Convolution inequalities}

\subsection{Outline.} The purpose of this section is to prove Theorem
1 in the case of the surfaces $\Sigma_i$ being linear subspaces and Theorem 2 
for linear maps $\phi_i$. These
special cases are known: it suffices to use the previously outlined
approach via the Loomis-Whitney inequality and combine it with a
change of variables. However, as outlined before, that proof is not
stable. We will give a new proof of this special case; that proof will
then be stable enough to handle the nonlinear setting as well. We
start with a description of the underlying geometry, which translates
into statements about parallelepipeds. Geometric properties about
these parallelepipeds will determine factors coming  from the
transformation formula.

\subsection{Basic facts about parallelepipeds.} Let $H_1, H_2$ and
$H_3$ be subspaces of $\R^n$ such that the direct sum of the
orthogonal spaces yield $\R^n$, i.e.
$$\R^n = H_{1}^{\perp} \oplus H_{2}^{\perp} \oplus  H_{3}^{\perp}.$$
 Denote $n_i = \dim(H_i)$
and thus $\codim(H_i) = \dim(H_i^{\perp}) = n-n_i$. The vectors $(v^i_j)_{i=1,2,3,j=1,\dots ,n-n_i}$ are chosen such
that $(v^i_j)_{j=1,\dots ,n-n_i}$ form an orthonormal basis of $H_i^{\perp}$. 
There is a unique dual  basis  $(w^i_j)_{i=1,2,3,j=1,\dots ,n-n_i}$ defined by 
the  relations
$$ \left\langle v^i_j, w^k_l \right\rangle = \delta_{ik}\delta_{jl}.$$
We write $V_i$ for the $n \times (n-n_i)$ matrix containing $(v^i_j)_{j=1,\dots ,n-n_i}$ and
$W_i$ for the matrix of the same size containing $(w^i_j)_{j=1,\dots ,n-n_i}$. Our condition on
the codimensions implies that concatenation yields square matrices
$$ V = \left(V_1, V_2, V_3\right) \qquad \mbox{and} \qquad W = \left(W_1, W_2, W_3\right)$$
satisfying
$$ V^TW = \mbox{id}_{n \times n}.$$
From this, obviously
\[ \gamma:= |\det(V)| = |\det W|^{-1}. \]

We recall that, given a matrix $A$, we use $\left| A\right|$ as an
abbreviation for Gramian determinants, i.e.
$$ \left| A \right| = \left|\mbox{det}\left(A A^t\right)\right|^{1/2},$$
where $A$ may be any $n \times \cdot$ matrix and the number can be understood as the $\cdot-$dimensional Hausdorff measure of the parallelepiped formed by the vectors. 
In particular, in concordance with our previous use of the notation $\gamma_3$ in the fiber representation
$$ \gamma_i := |W_i^t|.$$
For simplicity we understand the indices as integers modulo $3$ below. 

\begin{lemma} \label{faces} For each $i \in \left\{1,2,3\right\}$, we have
\[ |(W_{i-1} W_{i+1})^t| = \gamma^{-1} \] 
and
\[ |(V_{i-1}V_{i+1})^t|= \gamma |W_i^t|\] 
\end{lemma}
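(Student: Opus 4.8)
The plan is to reduce both identities to a single algebraic fact: that $W^tW$ is the inverse of the Gram matrix of the columns of $V$, combined with Jacobi's identity for complementary principal minors.

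First I would introduce the Gram matrix $G = V^tV$. Since $V^tW = \mathrm{id}$ forces $W = (V^t)^{-1}$, hence $W^t = V^{-1}$, we obtain $W^tW = V^{-1}(V^t)^{-1} = (V^tV)^{-1} = G^{-1}$, and $\det G = (\det V)^2 = \gamma^2 > 0$. The decisive structural observation is that $G$ is block-partitioned according to the three groups of columns $V_1, V_2, V_3$, and that each diagonal block equals $V_i^tV_i = \mathrm{id}_{n-n_i}$ because the columns of $V_i$ are orthonormal; in particular $\det(V_i^tV_i) = 1$. This orthonormality is the single geometric input that makes the constants come out as stated.

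Next I would observe that every quantity in the lemma is the square root of a principal minor of $G$ or of $G^{-1}$. Indeed, for any selection of columns of $W$, the matrix appearing inside the Gramian $|(\,\cdot\,)^t|$ is exactly the corresponding principal submatrix of $W^tW = G^{-1}$, and likewise for $V$ and $G$. Writing $S$ for the index set associated with the blocks $i-1$ and $i+1$ (of total size $(n-n_{i-1})+(n-n_{i+1}) = n_i$ by the codimension relation $\sum_j(n-n_j)=n$, whose complement $S^c$ is the block of $i$, of size $n-n_i$), this gives
\[
|(W_{i-1}W_{i+1})^t|^2 = \det\big((G^{-1})[S,S]\big), \qquad |W_i^t|^2 = \det\big((G^{-1})[S^c,S^c]\big),
\]
together with $|(V_{i-1}V_{i+1})^t|^2 = \det(G[S,S])$.

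The final step is Jacobi's identity in its Schur-complement form: for a symmetric invertible $G$ and any index set $S$,
\[
\det\big((G^{-1})[S,S]\big) = \frac{\det(G[S^c,S^c])}{\det G}.
\]
Applying this with $S$ as above and using $\det(G[S^c,S^c]) = \det(V_i^tV_i) = 1$ and $\det G = \gamma^2$ yields $|(W_{i-1}W_{i+1})^t|^2 = \gamma^{-2}$, which is the first identity. For the second I would apply the same principal-minor relation to $S^c$, obtaining $|W_i^t|^2 = \det\big((G^{-1})[S^c,S^c]\big) = \det(G[S,S])/\gamma^2 = |(V_{i-1}V_{i+1})^t|^2/\gamma^2$, i.e. $|(V_{i-1}V_{i+1})^t| = \gamma|W_i^t|$. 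Positive-definiteness of $G$ removes any sign ambiguity, so taking square roots is harmless. I do not expect a genuine obstacle once the problem is phrased through $G$ and $G^{-1}$; the only points requiring care are the bookkeeping that the selected-column Gram matrices really are the claimed principal submatrices (immediate from the definition of the Gramian, or from Cauchy–Binet) and the index-size count $\dim S = n_i$.
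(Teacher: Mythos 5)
Your proof is correct, but it takes a genuinely different route from the paper's. You work entirely at the level of the Gram matrix $G=V^tV$: from $V^tW=\mathrm{id}$ you get $W=(V^t)^{-1}$, hence $W^tW=G^{-1}$ and $\det G=\gamma^2$; you then identify every quantity in the lemma as the square root of a principal minor of $G$ or $G^{-1}$ (with the complementary block $G[S^c,S^c]=V_i^tV_i=\mathrm{id}$ by orthonormality), and both identities drop out of Jacobi's identity $\det\bigl((G^{-1})[S,S]\bigr)=\det\bigl(G[S^c,S^c]\bigr)/\det G$, applied once to $S$ and once to $S^c$, with positive definiteness of $G$ removing sign ambiguities. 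The paper instead argues geometrically with the vectors themselves: it establishes the span inclusion $\mathrm{span}(W_i-V_i)\subset\mathrm{span}(W_{i-1},W_{i+1})$ via the dual-basis pairing, so that by multilinearity $\gamma^{-1}=\det(W_{i-1}\,W_{i+1}\,W_i)=\det(W_{i-1}\,W_{i+1}\,V_i)$, and then factors this volume using the orthogonality of $V_i$ to $\mathrm{span}(W_{i-1},W_{i+1})$ together with $|V_i^t|=1$; the second identity is obtained symmetrically by substituting suitably renormalized columns of $W_i$ for $V_i$ inside $\det(V_1V_2V_3)$. The two arguments are ultimately two faces of the same block-matrix computation (Jacobi's identity is itself proved by exactly such Schur-complement manipulations), but yours is more systematic: one standard identity handles both claims uniformly and makes the role of the orthonormality hypothesis ($\det(V_i^tV_i)=1$) completely transparent, whereas the paper's version is more elementary and keeps visible the dual-basis geometry (projection arguments, orthogonal volume factorization) that it re-uses later in the polyhedral and nonlinear settings. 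Your bookkeeping points all check out: the selected-column Gram matrices really are the principal submatrices $(G^{-1})[S,S]$ and $G[S,S]$ (noncontiguity of the blocks for $i=2$ is harmless, since simultaneous row--column permutations preserve determinants), and the size count $|S|=(n-n_{i-1})+(n-n_{i+1})=n_i$ follows from $\sum_{j}(n-n_j)=n$.
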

\begin{proof} We show a set inclusion about spaces spanned by these vectors:
$$ \mbox{span}(W_i - V_i) \subset \spa(W_{i-1} W_{i+1}).$$
Using that we have dual basis, a vector is in the right-hand set if and only if
its projections onto any vector from $W_i$ is 0, which is precisely the case if its projection
onto any vector from $V_i$ is 0. Now, with duality and orthonormality of elements in $V_i$
$$ \left\langle v_i^a, w_i^b - v_i^b \right\rangle = \left\langle v_i^a, w_i^b \right\rangle - \left\langle v_i^a, v_i^b \right\rangle = \delta_{ab} - \delta_{ab} = 0.$$
Now, using the multilinearity of the determinant, we get
\begin{align*}
 \gamma^{-1} &= \det(W_{i-1} W_{i+1} W_i)  = \det(W_{i-1} W_{i+1} V_i).
\end{align*}
Since we are dealing with dual bases, the volume of the p described by the determinant factors into
\begin{align*}
 \gamma^{-1} &= \det(W_{i-1} W_{i+1} V_i)\\
&= \det( (W_{i-1} W_{i+1})^t (W_{i-1} W_{i+1}))^{1/2} \det(V_i^t V_i)^{1/2}\\
& = |(W_{i-1} W_{i+1})^t| .
\end{align*}
A similar argument implies 
\[ \spa( W_i (W_i W_i^t)^{-1} - V_i) \subset \spa(V_{i-1},V_{i+1}) \] 
and hence 
\[ \gamma = \det(V_1V_2V_3) = \det(V_1V_2 W_3 (W_3 W_3^t)^{-1}) = 
|(V_1V_2)^t| |W_3^t|^{-1} \]
which is the second identity. 
\end{proof}

\subsection{Proof of the linear case.} Using these geometric considerations, we are now able
to produce a complete proof of the linear case. The main idea consists in applying Cauchy-Schwarz
twice, once globally on a surface and once within the fiber of integration. This will lead to a suitable
decoupling of the quantity allowing for a full solution via a further change of coordinates.
\begin{prop} Let $H_1, H_2$ and $H_3$ be as above.
 Then, for all $f_1 \in L^2(H_1), f_2 \in L^2(H_2)$, 
$$  \|f_{1}*f_{2}\|_{L^2(H_3)}  \le  \frac{1}{\sqrt{\gamma}} \Vert f_1 \Vert_{L^2(H_1)} \Vert f_2 \Vert_{L^2(H_2)}.
$$
\end{prop}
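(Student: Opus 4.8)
The plan is to prove the linear convolution inequality
$$\|f_1 * f_2\|_{L^2(H_3)} \le \gamma^{-1/2}\|f_1\|_{L^2(H_1)}\|f_2\|_{L^2(H_2)}$$
by a double application of Cauchy-Schwarz, exactly as the paper's outline advertises. I begin by writing the convolution concretely. For $z \in H_3$, the convolution $(f_1*f_2)(z)$ is an integral of $f_1(x)f_2(z-x)$ over the fiber $\{x \in H_1 : z - x \in H_2\}$, weighted by a geometric Jacobian factor $\gamma_3^{-1}$ arising from the coarea formula (the cosine-of-angle factor described in the introduction). The fiber over a fixed $z$ is an affine subspace of dimension $(n-n_1)+(n-n_2)-(n-n_3) = n_1 + n_2 - n = n - n_3$, matching the codimension of $H_3$, so the integral is genuinely lower-dimensional and the coarea/Loomis-Whitney bookkeeping is governed precisely by the parallelepiped volumes computed in Lemma \ref{faces}.

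Next I would set up the duality pairing. Rather than estimating $\|f_1*f_2\|_{L^2(H_3)}$ directly, I test against $f_3 \in L^2(H_3)$ and bound the trilinear form
$$I = \int (f_1*f_2)(z)\, f_3(z)\, d\mu_{H_3}(z),$$
which unfolds into an integral of $f_1(x)f_2(y)f_3(x+y)$ over the constraint set $\{x \in H_1, y \in H_2, x+y \in H_3\}$ against the appropriate Jacobian. The first Cauchy-Schwarz is applied ``within the fiber'': on each fiber I split the integrand, pairing $f_3$ with one of the two factors and using Cauchy-Schwarz in the fiber variable to peel off one function while introducing the $L^2$ norm of the pair on the fiber. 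The second Cauchy-Schwarz is applied ``globally'' across the base, combining the surviving mass of $f_1$ and $f_2$. The geometric constants $\gamma^{-1}$ and $\gamma|W_i^t|$ from Lemma \ref{faces} are exactly what is needed to convert each fiber-wise and base-wise volume element into the surface measures $\mathcal{H}^{n_i}$ on the respective $H_i$, and tracking how these factors multiply through the two Cauchy-Schwarz steps is what produces the final $\gamma^{-1/2}$.

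The cleanest way to organize the computation is by a change of variables that linearizes everything. Since $H_1^\perp \oplus H_2^\perp \oplus H_3^\perp = \R^n$, I can use the dual bases $V = (V_1, V_2, V_3)$ and $W = (W_1, W_2, W_3)$ with $V^T W = \mathrm{id}$ to introduce adapted coordinates in which each $H_i$ and each fiber is a coordinate subspace. In these coordinates the convolution integral becomes a standard Loomis-Whitney-type integral, the Jacobian of the change of variables is $|\det V| = \gamma$, and the three face-volume identities of Lemma \ref{faces} convert between the flat Lebesgue measures in the new coordinates and the Hausdorff measures on the $H_i$. The inequality then reduces to the model three-plane estimate, i.e. the three-dimensional Loomis-Whitney inequality described in the ``simplest case,'' applied in the fibered form.

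The main obstacle I anticipate is purely bookkeeping: correctly matching the powers of $\gamma$ and the Gramian determinants $|W_i^t|$ across the two Cauchy-Schwarz applications and the change of variables, so that the geometric factors telescope to give exactly $\gamma^{-1/2}$ rather than some other power. The Lemma has been engineered to supply precisely the needed identities $|(W_{i-1}W_{i+1})^t| = \gamma^{-1}$ and $|(V_{i-1}V_{i+1})^t| = \gamma|W_i^t|$, so the conceptual content is light; the delicate point is ensuring the fiber measure and the base measures are normalized consistently, since an error in any single Jacobian would shift the final exponent. A secondary subtlety is verifying that the fiber dimension and the codimension counts are compatible so that the double Cauchy-Schwarz genuinely decouples the three functions into their full $L^2$ norms with no leftover loss — this is guaranteed by the scaling condition $n_1 + n_2 + n_3 = 2n$, which I would check explicitly at the point where the fiber integral is introduced.
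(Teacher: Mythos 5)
Your proposal is correct and takes essentially the same route as the paper: parametrize $H_3$, the fibers, and then $H_1,H_2$ by the dual basis $W$, use the two Gramian identities of Lemma \ref{faces} to track every Jacobian factor, and decouple by Cauchy--Schwarz (in the paper the ``global'' application is implicit in squaring the $L^2(H_3)$ norm rather than dualizing against $f_3$, while the explicit fiber application splits $f_1$ from $f_2$ --- note that $f_3(z)$ is constant on the fiber over $z$, so it can only be peeled off in the base variable, not ``paired in the fiber'' as you describe). Two cosmetic points: the paper proves the resulting block Loomis--Whitney estimate inline (one fiber Cauchy--Schwarz plus Fubini) instead of citing the model three-plane case, and your intermediate fiber-dimension expression $(n-n_1)+(n-n_2)-(n-n_3)$ is a slip --- the correct count is $n_1-(n-n_2)=n_1+n_2-n=n-n_3$, which is the value you end up stating.
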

\begin{proof} 
Let $V_i$ and $V$ be as above and let $W_i$ be the dual basis to $V_i$. Then 
$H_3 = W_1 \times W_2$ and $|(W_1,W_2)^t|= \gamma$. 
Thus, with $m^d$ the $d$ dimensional Lebesgue measure, by the area formula and
Lemma \ref{faces},  
$$
\label{trans1}  \int_{H^3}  (f_1 * f_2 )^2 d\mathcal{H}^{n_3}  
=  \gamma^{-1}    \int_{\R^{n_3}}  (f_1* f_2 ((W_1,W_2)y )^2 dm^{n_3}(y). \qquad \qquad \mbox{(3.3)}
$$
Moreover, with $s_i \in \spa(W_i)$, by the coarea and area formulas  
\[
\begin{split} 
  f_1 * f_2(s_1+s_2+s_3) = & |(V_1,V_2)^t|^{-1} 
\int_{\spa W_3} f_1(s_2+t) f_2(s_1+t) d\mathcal{H}^{n-n_3}(t)  
\\ = & \frac{|W_3^t|}{|(V_1,V_2)^t|}
\int_{\R^{n-n_3}}  f_1(s_2+ W_3z) f_2(s_1-W_3z) dm^{n-n_3}(z) 
\\ = & \gamma^{-1} \int_{\R^{n-n_3}}  f_1(s_2+ W_3z) f_2(s_1-W_3z) dm^{n-n_3}(z)
\end{split} 
\] 
where we used the second identity of  Lemma \ref{faces}. We continue  
\[ 
\begin{split} 
\Vert f_1  * f_2  \Vert_{L^2(H_3)}^2 
\le & \gamma^{-3}  \int_{\R^{n_3}} 
  \int_{\R^{n-n_3}} f_1^2(W_2 y_2 + W_3 y_3) dm^{n-n_3}(y_3)
\\ &\qquad  \times \int_{\R^{n-n_3} }
  f_2^2(W_1 y_1 + W_3 y_3) dm^{n-n_3}(y_3)  dm^{n_3}(y_1,y_2) 
\\ = & \gamma^{-3} \left( \int_{\R^{n_1}} f_1^2(W_2y_2+W_3y_3)dm^{n_1}(y_2,y_3) \right) \\
&\times \left( \int_{\R^{n_2}} f_2^2(W_1y_1+W_3y_3)dm^{n_2}(y_1,y_3) \right)
\\ = & \gamma^{-1}  \Vert f_1 \Vert_{L^2(H_1)}^2 \Vert f_2 \Vert_{L^2(H_2)}^2
\end{split} 
\] 
where we used again the considerations of \eqref{trans1}. 
 \end{proof}

\section{Convolution estimate: Proof of Theorem 1}

\subsection{Outline.} In this section we extend the previous argument
from hyperplanes to general polyhedral surfaces. We emphasize that the
problem has no special intrinsic connection to polyhedral surfaces and
we use them solely out of convenience: they are well suited for
approximating $C^1-$surfaces and, due to their piecewise linear
nature, allow for a relatively slick reduction to the purely linear
case as localizing will lead us with a locally linear
geometry. Naturally, since we need to be able to carry out a limit
process in the end, all our estimates will be independent of the number of faces
of the polyhedral surfaces.

\subsection{Fiber representations.} There is an explicit expression
for the convolution in terms of integration along the corresponding
fiber, that will be useful in the proof of the statement. We will
explicitely write $f_{i, \mu_i}$ to highlight the importance of the
surface along with function value in the following impression. For a
fixed $z \in \Sigma_{3}$, if
$$ \Gamma_z = \left\{x \in \Sigma_{1}: z-x \in \Sigma_{2}\right\} = \Sigma_1 \cap (z-\Sigma_2),$$
then, by thickening the surfaces and using the coarea formula,
\[  (f_{1,\mu_1} * f_{2,\mu_2}) (z) = \int_{x \in \Gamma_z}  \gamma_3^{-1}(x,y)     f_1(x)f_2(z-x) d\mathcal{H}^{n-n_3}(x) \]  
where the Gramian determinant $\gamma_3$ is given by
$$  \gamma_3(x,y) = |\det ( (N_1,N_2)^t (N_1,N_2))|^{\frac12}.\label{bilinerarconv} $$
It is the $(n-n_3)$-dimensional volume
of the parallelotope formed by the normal vectors of $\Sigma_1$ and
$\Sigma_2$. The geometric quantity introduced is implied by the
following identity for the Gramian determinant in $(n-n_3)$
dimensional space. This can be compared with affine-invariant
formulation of Bennett \& Bez' Brascamp-Lieb inequality in terms of
exterior algebra, where similar expression play comparable roles. 

\subsection{Polyhedral surfaces.} Using the fiber representations, we are now able to deal with the general
case of polyhedral surfaces -- the main idea of the proof is a suitable application of Cauchy-Schwarz on
two different domains, which allows for a suitable decoupling to take place and gives rise to a much simpler linear
expression.

\begin{prop} \label{pro}
Suppose the $\Sigma_i$ are polyhedral surfaces and that $$\gamma(x,y,z) \ge \gamma_0,$$ whenever all three normal vectors are defined.
 Then, for all $f_1 \in L^2(\Sigma_1), f_2 \in L^2(\Sigma_2)$, 
$$  \|f_{1}*f_{2}\|_{L^2(\Sigma_{3})}  \le  \gamma_0^{-\frac{3}{2}} \Vert f_1 \Vert_{L^2(\Sigma_1)} \Vert f_2 \Vert_{L^2(\Sigma_2)}.$$
\end{prop}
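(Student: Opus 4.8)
The plan is to reduce the polyhedral case to the purely linear Proposition by localizing to the pieces where all three surfaces are flat, and then to reassemble the local estimates. Since $\Sigma_1,\Sigma_2,\Sigma_3$ are polyhedral, each decomposes into finitely many relatively open flat faces; on a face the surface coincides with an affine translate of a linear subspace, so the normal frame $N_i$ is locally constant. The convolution $f_1*f_2$ on $\Sigma_3$ is supported on the fibers $\Gamma_z=\Sigma_1\cap(z-\Sigma_2)$, and as $z$ ranges over a fixed face of $\Sigma_3$ the fiber $\Gamma_z$ meets a fixed collection of faces of $\Sigma_1$ and (correspondingly) of $\Sigma_2$. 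The idea is therefore to partition the double integral defining $\|f_1*f_2\|_{L^2(\Sigma_3)}^2$ according to which face-triple $(F_1,F_2,F_3)$ the integration variables live on.

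First I would fix a face $F_3\subset\Sigma_3$ and decompose the fiber integral from the fiber representation
\[
(f_{1,\mu_1}*f_{2,\mu_2})(z)=\int_{x\in\Gamma_z}\gamma_3^{-1}(x,z-x)\,f_1(x)f_2(z-x)\,d\mathcal{H}^{n-n_3}(x)
\]
as a sum over the faces $F_1\subset\Sigma_1$ that $\Gamma_z$ traverses, writing $f_1=\sum_{F_1}f_1\mathbf{1}_{F_1}$ and likewise $f_2=\sum_{F_2}f_2\mathbf{1}_{F_2}$. On each flat triple the three surfaces agree locally with linear subspaces $H_1,H_2,H_3$ whose transversality constant $\gamma=|\det(N(F_1,F_2,F_3))|$ satisfies $\gamma\ge\gamma_0$ by hypothesis, so the linear Proposition applies verbatim to the pieces $f_1\mathbf{1}_{F_1}$ and $f_2\mathbf{1}_{F_2}$ and yields
\[
\|(f_1\mathbf{1}_{F_1})*(f_2\mathbf{1}_{F_2})\|_{L^2(H_3)}\le\gamma_0^{-1/2}\,\|f_1\mathbf{1}_{F_1}\|_{L^2}\,\|f_2\mathbf{1}_{F_2}\|_{L^2}.
\]
The content of the proof is then the bookkeeping that turns these local estimates into a single global bound on $\|f_1*f_2\|_{L^2(\Sigma_3)}$, and this is exactly where the two Cauchy-Schwarz applications advertised in the outline enter: one Cauchy-Schwarz inside each fiber to control the cross terms coming from distinct face-pairs, and one outer Cauchy-Schwarz over $\Sigma_3$ to recombine. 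The apparent growth in the number of faces must be absorbed, and I expect the mechanism to be an almost-orthogonality: for a fixed $z$ the map $x\mapsto(x,z-x)$ is injective, so the contributions of distinct face-pairs $(F_1,F_2)$ live on essentially disjoint portions of the fiber, and summing their $L^2$ masses reproduces $\|f_1\|_{L^2(\Sigma_1)}^2\|f_2\|_{L^2(\Sigma_2)}^2$ without multiplicity.

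The main obstacle is precisely controlling the interaction between \emph{different} flat pieces rather than the diagonal terms, and accounting honestly for the transversality factor. The linear estimate has the clean constant $\gamma_0^{-1/2}$, but the passage to the polyhedral case forces the weaker $\gamma_0^{-3/2}$ stated in the theorem: when the fiber $\Gamma_z$ crosses several faces of $\Sigma_1$ and $\Sigma_2$ simultaneously, the local linear maps used to straighten each piece do not match up across face boundaries, so one cannot choose a single global change of variables. The extra powers of $\gamma_0^{-1}$ arise from estimating this nonlocal mismatch crudely — replacing the sharp local Jacobians $\gamma^{-1}$ appearing in the fiber and area formulas by the uniform bound $\gamma_0^{-1}$ at each of the points where Lemma~\ref{faces} was invoked. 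Concretely, the three factors of $\gamma^{-1}$ that combined to $\gamma^{-1}$ in the linear proof (via the telescoping of $\gamma^{-3}$ against $\|(W_1,W_2)^t\|$ and $|(V_1,V_2)^t|$) can no longer telescope once the $V_i$ and $W_i$ vary from face to face, so each must be bounded separately by $\gamma_0^{-1}$, leaving the overall $\gamma_0^{-3/2}$. Making this loss rigorous — and confirming that after the two Cauchy-Schwarz steps the leftover geometric weights are dominated by $\gamma_0^{-3}$ uniformly in the face count — is the crux of the argument; once it is in place, the limit to $C^1$ surfaces in Theorem~\ref{convolution} follows by approximation, since all constants are independent of the number of faces.
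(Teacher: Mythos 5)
Your proposal has a genuine gap at the reassembly step, and it is precisely the step you defer as ``the crux.'' If you decompose first and apply the linear Proposition to each face pair, you obtain bounds on $\|(f_1\chi_{F_1})*(f_2\chi_{F_2})\|_{L^2(\Sigma_3)}$ for each pair $(F_1,F_2)$ separately, but the outputs of distinct pairs are neither disjointly supported nor orthogonal in $L^2(\Sigma_3)$: for a fixed $z$ the fiber $\Gamma_z$ crosses many faces, so many pairs contribute at the same point. Disjointness of the pieces \emph{within} a fiber gives only the pointwise identity $(f_1*f_2)(z)=\sum_{F_1,F_2}\bigl((f_1\chi_{F_1})*(f_2\chi_{F_2})\bigr)(z)$, and passing to the $L^2$ norm by the triangle inequality yields
\[
\|f_1*f_2\|_{L^2(\Sigma_3)}\le \gamma_0^{-1/2}\Bigl(\sum_{j}\|f_1\chi_{F_1^j}\|\Bigr)\Bigl(\sum_{k}\|f_2\chi_{F_2^k}\|\Bigr)\le \gamma_0^{-1/2}\sqrt{JK}\,\|f_1\|\,\|f_2\|,
\]
a constant growing with the numbers $J,K$ of faces. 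This is fatal, since the whole point of Proposition \ref{pro} is a bound uniform in the face count so that the polyhedral approximation of $C^1$ surfaces in Theorem \ref{convolution} can be carried out. Your ``almost-orthogonality'' heuristic is exactly what fails --- the $L^2$ cross terms do not vanish --- and no bookkeeping with powers of $\gamma_0$ can repair a loss that lives in the face count rather than in the transversality.

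The paper escapes this by reversing the order of operations. It applies Cauchy-Schwarz \emph{in the fiber} to the full, undecomposed functions, but applied to a carefully weighted functional: it proves
\[
I:=\int_{\Sigma_3}\Bigl(\gamma_0\int_{\Gamma_z}\frac{\gamma(x,z-x,z)^{1/2}}{\gamma_3(x,z-x)}\,f_1(x)f_2(z-x)\,d\mathcal{H}^{n_1+n_2-n}(x)\Bigr)^2 d\mathcal{H}^{n_3}(z)\le \|f_1\|_{L^2(\Sigma_1)}^2\|f_2\|_{L^2(\Sigma_2)}^2,
\]
which implies the Proposition because $\gamma\ge\gamma_0$ and the fiber representation $(f_1*f_2)(z)=\int_{\Gamma_z}\gamma_3^{-1}f_1f_2$ give $I\ge\gamma_0^{3}\|f_1*f_2\|_{L^2(\Sigma_3)}^2$ --- this, and not a crude bounding of three Jacobians, is where the exponent $-3/2$ comes from. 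After the fiber Cauchy-Schwarz the integrand is a product of two fiber integrals, one \emph{linear} in $f_1^2$ and one linear in $f_2^2$; substituting $f_1^2=\sum_j f_{1,j}^2$ and $f_2^2=\sum_k f_{2,k}^2$ then expands the quantity \emph{exactly}, with no cross terms whatsoever to control, and each summand is handled by redoing the linear-case change-of-variables computations (via Lemma \ref{faces}) on the relevant flat pieces, after discarding the factors $\gamma_0/\gamma_3\le 1$. Summing $\|f_{1,j}\|^2\|f_{2,k}\|^2$ over $(j,k)$ recovers $\|f_1\|^2\|f_2\|^2$ with no dependence on the number of faces. Note also that the weight $\gamma(x,z-x,z)$ attached to the $f_{1,j}$-factor involves the face of $\Sigma_2$ containing $z-x$, which is unrelated to the index $k$; this is the genuinely nonlocal interaction (reflected in the refined quadruple-point hypothesis stated after Theorem \ref{convolution}) that your per-triple application of the linear Proposition cannot see.
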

\begin{proof} We assume without loss of generality that the polyhedral surfaces are made up of finitely many faces and will prove a bound uniform in the number of faces. The claim follows from the inequality
\begin{align*}
I := &\int_{\Sigma_3}  \left(\gamma_0 \int_{\Gamma_z}  \frac{\gamma(x,z-x,z)^{\frac12}}{\gamma_3(x,z-x)} f_1(x)f_2(z-x) d\mathcal{H}^{n_1+n_2-n}(x) \right)^2   d\mathcal{H}^{n_3}(z) \\
 &\leq \|f_1\|^2_{L^2(\Sigma_1)} \|f_2\|^2_{L^2(\Sigma_2)}.
\end{align*}
which we  will prove now: applying Cauchy-Schwarz inequality  in the fiber $\Gamma_z$  yields
\[\begin{split}  I \leq & \int_{\Sigma_3}   \left(\gamma_0\int_{\Gamma_z}
\frac{\gamma(x,z-x,z)^{\frac12}}{\gamma_3(x,z-x)}
f_1(x)^2d\mathcal{H}^{n_1+n_2-n}(x)\right)
\\ & \times    \left(\gamma_0\int_{\Gamma_z}
\frac{\gamma(x',z-x',z)^{\frac12}}{\gamma_3(x',z-x')}
f_2(z-x')^2d\mathcal{H}^{n_1+n_2-n}(x')\right) \, d\mathcal{H}^{n_3}(z).
\end{split} 
\] 
The right hand side  is \textit{linear} in $f_i^2$  which we exploit by  
localization procedure in $\Sigma_1$ and $\Sigma_2$. Take a decomposition of $\Sigma_1, \Sigma_2$
$$ \Sigma_1 = \dot \bigcup_{j}{\Sigma_{1,j}} \qquad   \Sigma_2 = \dot \bigcup_{k}{\Sigma_{2,k}} \qquad 
\Sigma_3 = \dot \bigcup_{l}{\Sigma_{3,l}}
$$
with the property that for each $j$ the normal vectors $\nu_1$ and $\nu_2$ are constant on the sets $ \Sigma_{1,j}$ and $\Sigma_{2,k})$. By a further possibly countable decomposition 
we may also achieve that $(\Sigma_{1,j}+\Sigma_{2,k})\cap \Sigma_3$ lies in a single  set $\Sigma_{3,l}$. 
We abbreviate $ f_{i,\cdot} := f_{i}\chi_{\Sigma_{i,\cdot}}$. Using this
decomposition, it remains to estimate
\[\begin{split}  \int_{\Sigma_3}  \left(\gamma_0\int_{\Gamma_z}
\frac{\gamma(x,z-x,z)^{\frac12}}{\gamma_3(x,z-x)}
\sum_{j_1} f_{1,{j_2}}(x)^2d\mathcal{H}^{n_1+n_2-n}(x)\right)
\hspace{-7cm} & \\ & \times    \left(\gamma_0\int_{\Gamma_z}
\frac{\gamma(x',z-x',z)^{\frac12}}{\gamma_3(x',z-x')}
\sum_{j_2}f_{2,{j_2}} (z-x')^2d\mathcal{H}^{n_1+n_2-n}(x')\right) \, d\mathcal{H}^{n_3}(z).
\end{split} 
\] 

Note that $f_{i,i_1}$ and $f_{i,i_2}$ have disjoint support unless $i_1 = i_2$. Thus, we can expand
the square and use the linearity of the integral  to see that is suffices to estimate 
\[\begin{split}  \int_{\Sigma_3}  \left(\gamma_0 \int_{\Gamma_z}
\frac{\gamma(x,z-x,z)^{\frac12}}{\gamma_3(x,z-x)}
f_{1,{j_2}}(x)^2d\mathcal{H}^{n_1+n_2-n}(x)\right)
\hspace{-7cm} & \\ & \times   \left(\gamma_0 \int_{\Gamma_z}
\frac{\gamma(x',z-x',z)^{\frac12}}{\gamma_3(x',z-x')}
f_{2,{j_2}} (z-x')^2d\mathcal{H}^{n_1+n_2-n}(x')\right) \, d\mathcal{H}^{n_3}(z)
\\  \le & \Vert  f_1 \Vert_{L^2(\Sigma_1)}^2  \Vert  f_2 \Vert_{L^2(\Sigma_1)}^2.
\end{split} 
\] 
Note that the geometric expression
$\gamma(x,z-x,z)^{\frac12}/\gamma_3(x,z-x)$ is constant for every
$j_1, j_2$ provided we have chosen set with sufficiently small
support, because of the polyhedral natural of the surfaces and the
choice of our decomposition. It is evident that $\gamma_3(\cdot, \cdot) \geq \gamma_0$
and we may thus estimate the expression from above by
\[\begin{split}  J = \int_{\Sigma_3}   \left(\int_{\Gamma_z}
\gamma(x,z-x,z)^{\frac12}
f_{1,j_1}(x)^2d\mathcal{H}^{n_1+n_2-n}(x)\right)
\hspace{-7cm} & \\ & \times    \left(\int_{\Gamma_z}
\gamma(x,z-x',z)^{\frac12}
f_{2,{j_2}} (z-x')^2d\mathcal{H}^{n_1+n_2-n}(x')\right) \, d\mathcal{H}^{n_3}(z)
\end{split} 
\] 
For every fixed $j_1, j_2$, this is now
precisely the expression we had to deal with in our proof in the
linear case -- redoing the same steps as before yields that for every
$j_1, j_2$ 
$$  J  \le  \Vert  f_1 \Vert_{L^2(\Sigma_1)}^2  \Vert  f_2 \Vert_{L^2(\Sigma_1)}^2$$
and this concludes the proof.
\end{proof}

\begin{proof}[Proposition \ref{pro} implies Theorem 1.]
 Recall that we understand the restriction $f_{1}*f_{2}\big|_{\Sigma_{3}}$ in the sense of 
the dense embedding $C_0^{\infty}(\mathbb{R}^3) \hookrightarrow L^2(\mathbb{R}^3)$. It suffices to consider continuous functions $f_i$ with compact support defined on $\mathbb{R}^n$. But then both sides converge for $C^1$ hypersurfaces as the polyhedral approximation tends to the hypersurface.
\end{proof}

\section{Proof of the Linear Brascamp-Lieb inequality}
This section gives a new proof for the linear Brascamp-Lieb inequality much in the same spirit
as the proof for the convolution estimate in the linear case. Again, the proof will be stable enough
to allow it being transferred to the nonlinear setting.
\begin{prop}
Let $A_i: \R^n \to \R^{n_i}$ be linear maps with maximal rank $n_i$ for $1 \leq i \leq 3$, where
$$n_1+n_2+n_3=2 n.$$
Let $V_i$ be an orthonomal basis of the null space of $A_i$, let $V = (v_1, v_2, v_3)$ and
\[ \gamma = |\det V|. \]
Then we have for all $f_i \in L^2(\R^{n_i})$
\[ \prod_{i=1}^3 |A_i|^{1/2} \int_{\R^n} \prod_{i=1}^3 f_i \circ A_i(x) dx
\le   \gamma^{-1/2}  \prod \Vert f_i \Vert_{L^2(\R^{n_i})}. \]
\end{prop}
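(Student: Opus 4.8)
The plan is to prove the inequality by a single linear change of variables that turns the trilinear form into a flat Loomis--Whitney integral, so that the entire constant is produced by Jacobian factors; this parallels the convolution proof, where the area formula together with a change of coordinates carried all of the geometry. Throughout I may take $f_i\ge 0$, and I may assume $\gamma>0$ since otherwise there is nothing to prove.

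First I would substitute $x=Vy$ with $V=(V_1,V_2,V_3)$ and $y=(y_1,y_2,y_3)$, $y_i\in\R^{\,n-n_i}$; since $V$ is invertible with $|\det V|=\gamma$ this contributes a factor $\gamma$ from $dx=\gamma\,dy$. The point of this basis is that $A_iV_i=0$, so $A_ix$ loses all dependence on $y_i$: writing $\{i,j,k\}=\{1,2,3\}$ we get $A_ix=\hat M_i(y_j,y_k)$ with $\hat M_i=A_i(V_j,V_k)$ a square $n_i\times n_i$ matrix, which is invertible because the spanning hypothesis gives $\R^n=N_1\oplus N_2\oplus N_3$ and hence $(V_j,V_k)$ maps isomorphically onto a complement of $N_i=\ker A_i$. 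After this substitution the integral reads $\gamma\int_{\R^n}\prod_i h_i\,dy$ with $h_i:=f_i\circ\hat M_i$, where $h_1$ depends only on $(y_2,y_3)$, $h_2$ only on $(y_1,y_3)$, and $h_3$ only on $(y_1,y_2)$; note $(n-n_j)+(n-n_k)=n_i$, so each $h_i$ is a genuine function on $\R^{n_i}$.

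This is exactly the block form of the flat $L^2$ Loomis--Whitney inequality, which has constant one and follows from two applications of Cauchy--Schwarz (first integrate out the block shared by two of the three factors, then apply Cauchy--Schwarz in the remaining variables and factor the resulting product of $L^2$ norms). Thus $\gamma\int_{\R^n}\prod_i h_i\,dy\le\gamma\prod_i\|h_i\|_{L^2}$, and since $\|h_i\|_{L^2(\R^{n_i})}^2=|\det\hat M_i|^{-1}\|f_i\|_{L^2}^2$ by the substitution $u=\hat M_i(\cdot)$, the whole integral is bounded by $\gamma\prod_i|\det\hat M_i|^{-1/2}\|f_i\|_{L^2}$. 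Here the three substitutions decouple completely, since each only evaluates a single $L^2$ norm, so no difficulty arises from the $\hat M_i$ sharing variables.

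It remains to identify $|\det\hat M_i|$, and this is the one genuinely geometric step. I would factor the maximal-rank map as $A_i=\tilde A_i U_i^{t}$, where $U_i$ is an orthonormal basis of $N_i^{\perp}$ and $\tilde A_i\in GL(n_i)$; then $|A_i|=|\det\tilde A_i|$ and $\hat M_i=\tilde A_i\,U_i^{t}(V_j,V_k)$. The claim is $|\det(U_i^{t}(V_j,V_k))|=\gamma$, equivalently that projecting the columns of $(V_j,V_k)$ onto $N_i^{\perp}$ preserves the volume $|\det V|$; this is the one-line computation $\det\big((V_j,V_k)^{t}P_i(V_j,V_k)\big)=(\det V)^2$ with $P_i=U_iU_i^{t}$ the orthogonal projection onto $N_i^{\perp}$, obtained from the Schur complement of the Gram matrix of $V$ and using only that $V_i$ is orthonormal (in the spirit of Lemma \ref{faces}). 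Granting it gives $|\det\hat M_i|=|A_i|\,\gamma$, so the constant collapses to $\gamma\prod_i(|A_i|\gamma)^{-1/2}=\gamma^{-1/2}\prod_i|A_i|^{-1/2}$, and multiplying by $\prod_i|A_i|^{1/2}$ yields the stated bound. The main obstacle is therefore purely this bookkeeping of determinants: one must check that the single $\gamma$ from $dx=\gamma\,dy$ together with the three factors $|\det\hat M_i|^{-1/2}$ combine to exactly $\gamma^{-1/2}$, which is precisely what the identity $|\det\hat M_i|=|A_i|\gamma$ guarantees. As a shortcut one may instead note that after writing $g_i=f_i\circ\tilde A_i$ the inequality becomes $\int_{\R^n}\prod_i g_i(U_i^{t}x)\,dx\le\gamma^{-1/2}\prod_i\|g_i\|_{L^2}$, which is the dual form of the linear convolution estimate proved earlier and can be invoked directly.
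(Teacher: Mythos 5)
Your argument is correct, and the computations check out: with $\gamma>0$ the substitution $x=Vy$ is legitimate, $\hat M_i=A_i(V_j,V_k)$ is square of size $n_i=(n-n_j)+(n-n_k)$ and invertible, the block-form $L^2$ Loomis--Whitney inequality with constant $1$ follows from two applications of Cauchy--Schwarz exactly as you describe, and your Schur-complement identity $\det\bigl((V_j,V_k)^{t}P_i(V_j,V_k)\bigr)=(\det V)^2$ correctly yields $|\det\hat M_i|=|A_i|\,\gamma$, after which the Jacobian factors collapse to $\gamma^{-1/2}$. However, this is a genuinely different route from the paper's, and in fact it is precisely the route the authors deliberately avoid: Section 3 notes that ``these special cases are known: it suffices to use the previously outlined approach via the Loomis--Whitney inequality and combine it with a change of variables,'' and that this proof ``is not stable'' --- a single global linear substitution straightening all three fibrations simultaneously has no analogue when the maps are nonlinear. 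The paper's own proof instead applies the coarea formula and Cauchy--Schwarz twice \emph{first} (once on $\R^{n_3}$ to eliminate $f_3$, once in the fiber $\{x:A_3x=z\}$), producing an expression linear in $f_1^2$ and $f_2^2$, and then verifies the resulting estimate on characteristic functions of parallelepipeds, extending to general functions by bilinearity, $L^2$-orthogonality and approximation; the volume bookkeeping there (orthogonally projecting the columns of $V_2,V_3$ along $V_1$, after reducing to $A_1A_1^{t}=\mathrm{id}$) is essentially your determinant identity in different clothing. What your approach buys is a short, complete, self-contained proof of the linear proposition, arguably more rigorous in its details than the paper's verification-plus-approximation sketch; what the paper's approach buys is stability, since the Cauchy--Schwarz-first structure localizes and is exactly what transfers to the nonlinear Theorem 2. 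One caveat: your closing ``shortcut'' identifying the trilinear form $\int_{\R^n}\prod_i g_i(U_i^{t}x)\,dx$ with the dual form of the linear convolution estimate is plausible but asserted without the Jacobian verification needed to match the two fibration structures; since your main argument does not rely on it, this does not affect correctness.
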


\begin{proof}
Let $P$ be the parallelepiped formed by $V$. Its volume is $\gamma$.
The matrix $A_1$ maps $P$ to a parallelepiped $P_1$ in $\R^{n_1}$ which is
spanned be the image of the vectors of $V_2$ and $V_3$.
We claim that its  volume is
\begin{equation} |(W_2W_3)^t|^{-1} |A_1|= \gamma |A_1|.  \end{equation}
First we reduce the assertion to the case $ A_1A_1^T = 1_{\R^{n_1}}$
which we can achieve by a linear change of coordinates in
$\R^{n_1}$. The volume of $P$ is the same as the volume of the
parallelepiped which we obtain by orthogonally projecting the vectors
$v_2^a$ and $v_3^a$ along $V_1$. Then we obtain $\tilde w_i^{a}$ such
that $(V, \tilde W^1, \tilde W^2)$ is the dual  basis to $(V,
W^1,W^2)$ and the $n_1$ dimensional volume of the parallelepiped
spanned by the columns of $\tilde W_2$ and $\tilde W_3$ is $\gamma$.
We repeat this argument for the other variables and   obtain
 \[ \int \prod \chi_{P_i} \circ A_i(x) dx = \gamma \]
as well as
\[ \Vert \chi_{P_1} \Vert_{L^2(\R^{n_1})} =  |W_{2}W_{3}|^{-1/2}
|A_1|^{1/2} = \gamma^{1/2} |A_1|^{1/2}. \]
This implies the claimed formula with equality for characteristic functions 
of such parallepipeds.  
For general  functions we proceed differently and we apply the coarea formula and Cauchy-Schwarz
inequality twice: 
\[
\begin{split} 
\prod_{i=1}^3 |A_i|^{\frac12} \int \prod_{i=1}^3 f_i \circ A_i(x) dx
\hspace{-3cm} & \\
= &  |A_3|^{-\frac12} |A_1|^{\frac12} |A_2|^{\frac12} \int_{\R^{n_3}} f_3(z) \int_{\{x: A_3(x)=z\} }    f_1(A_1x)f_2(A_2x) d\mathcal{H}^{n-n_3}dm^{n_3}(z) \\ 
 \le &    \Vert f_3 \Vert_{L^2(\R^{n_3})}  \int_{\R^{n_3}} \prod_{i=1}^{2}\left( \int_{\{x: A_3x = z\}} |A_3|^{\frac12} |A_i|^{-1} |f_i(A_i(x))|^2 d\mathcal{H}^{n-n_3}\right) dm^{n_3}(z)
\end{split} 
\]
We check the validity of the desired estimate for the special case of characteristic functions 
of parallepipeds $f_i$ for $i=1,2$ by plugging them in the right hand side expression. 
In this case, the integral over the fiber gives $1$
and we have to integrate the very same parellepiped in $\R^{n_3}$ as above and the 
squares of the $L^2$ norms are $|A_i|^{-1} \times \gamma$. The argument follows
for general functions by either interpreting these consideration as 
a determination of Gramian determinants in area and coarea formulas, 
or, alternatively, by approximating general continuous functions 
by  sums of multiples of characteristic functions of parallelepipeds.

\end{proof}

\section{Nonlinear Brascamp-Lieb inequality: Proof of Theorem 2}
This section concludes with a proof of the Brascamp-Lieb inequality
 $$  \int_{\R^n}  \left(\prod_{i=1}^3 |D\phi_i| \right)^{\frac12}
 f_1(\phi_1(x)) f_2(\phi_2(x)) f_3(\phi_3(x)) dx \le 
\sqrt{\frac{\rho}{\gamma_0}} \Vert f_1 \Vert_{L^2(\R^{n_1})} \Vert f_2 \Vert_{L^2(\R^{n_2})} \Vert f_3 \Vert_{L^2(\R^{n_3})}.$$
The argument is essentially identical to our previous argument and merely phrased in a slightly different language. Previously we were dealing with
the linear structure $x+y=z$ in $\mathbb{R}^n$ as induced by the convolution and nonlinear (but transversal) hypersurfaces. Now we are dealing with
flat surfaces and a nonlinear (but transversal) fiber structure. The crucial idea is, once again, that applying
Cauchy-Schwarz once on $\mathbb{R}^{n_3}$ and then once in the integration fiber yields a bilinear 
expression which, on small scales, reduces to the linear case while $L^2-$orthogonality allows for a reduction
to small scales.

\begin{proof}[Proof of Theorem 2.] It suffices to consider nonnegative functions; this implies that the integrals are defined -- possibly with the value $\infty$.  We start by rewriting the squared expression by the coarea formula as
$$ \left(\int_{\mathbb{R}^{n_3}} f_3(z)  \left(
\int_{\left\{x: \phi_3(x) = z\right\}}  |D\phi_1|^{\frac12}  |D\phi_2|^{\frac12}|D\phi_3|^{-\frac12} f_1(\phi_1(x))f_2(\phi_2(x))d\mathcal{H}^{n-n_3}(x) \right)
dm^{n_3}(z)\right)^2$$
The next step is again $L^2-$duality: we apply the Cauchy-Schwarz inequality on $\mathbb{R}^{n_3}$ and eliminate the $f_3$ term entirely. We rewrite the condition $n_1+n_2+n_3 = 2n$ as 
$$ \frac{n-n_2}{n_3} + \frac{n-n_1}{n_3} = 1 \qquad \mbox{and thus} \qquad 
|D\phi_3|^{-\frac12} = |D\phi_3|^{-\frac{n-n_2}{2n_3}}|D\phi_3|^{-\frac{n-n_1}{2n_3}}$$
and use the Cauchy-Schwarz inequality once more in
the fiber, which results in the integral
\begin{multline*}
\int_{\R^{n_3}}  \left(\int_{\left\{x: \phi_3(x) = z\right\}} |D\phi_1||D\phi_3|^{-\frac{n-n_2}{n_3} } 
 f_1(\phi_1(x))^2 d\mathcal{H}^{n-n_3}(x) \right) \\
\times \left(\int_{\left\{x: \phi_3(x) = z\right\}} |D\phi_2| |D\phi_3|^{-\frac{n-n_1}{n_3}}f_2(\phi_2(x))^2d\mathcal{H}^{n-n_3}(x)\right) dm^{n_3}(z) .
\end{multline*}
It is crucial to be aware of the arising dimensions: the total integral is an integral in
dimension $2(n-n_3)+n_3 = n_1+n_2$, and we want to bound it in terms
of the square of the $L^2$ norms, which again is a related to an
integral over a set of dimension $n_1+n_2$. The transversality condition implies that there
is a bijective mapping between the sets $\Sigma_1\times \Sigma_2$ and 
\[ \{ (x,y) \in \R^{n} \times \R^n : \phi_3(x)=\phi_3(y) \}. \]
Instead of constructing and working with this map directly we choose a more geometric
and less technical approach: it suffices again to verify the estimate for functions $f_1$ 
and $f_2$ supported on small parallelepipeds (which is implicitly a construction of 
the map between the two spaces). Indeed, if 
we can prove the inequality for characteristic functions
$$ (f_1, f_2) = (\chi_{E}, \chi_{F}) $$
for small parallelepipeds $(E,F) \subset \R^{n_1}\times \R^{n_2}$, the
entire inequality follows from the bilinearity of the expression and
the $L^2-$orthogonality of functions with disjoint support by mere
addition. As in the case of the convolution, it suffices to consider
piecewise linear maps $\phi_i$.  Now let $x,y \in \R^n$ such that
$\phi_3(x)= \phi_3(y) $. We may restrict ourselves to linear maps
$A_1=\phi_1$ and $A_2=\phi_2$ as well as $\phi_3= A_3^x$ and
$\phi_3=A_3^y$ near $x$ resp. $y$.  The two linear maps will differ in
general. There is no harm in applying a rotation $O$ at $x$. Hence we
may assume that the null spaces of $A_3^x$ and $A_3^y$ are the
same. We proceed as in the linear situation for which all quantities
have been explicitely computed, with a linear map $A_3$ defined by the
null space, by $A_3=A_3^x$ on the null space   of $A_1$ and by
$A_3=A_3^y$ on the null space   of $A_2$.

\medskip

The only difference to the previous case concerns the third map. The Gramian determinant 
is given by the volume of the parallelepiped spanned by image of $V_1 $ under the map
$A_3^x$ and $V_2$ under the map $A_3^y$, respectively. This volume is certainly biggest
if these maps induce an orthogonal image, in which case it is bounded by the product 
of the $(n-n_1)$ dimensional volume of the image of $V^1$ under $A_3^x$ and the
$(n-n_2)$ dimensional volume of the image of $V^2$ under
$A_3^y$. Comparison with the linear case shows that this factor is
controlled by the product of
$$ \rho_{1}(x) =    |D\phi_3(x)|^{-\frac{n-n_2}{n_3}}  \prod_{j=n-n_2+1}^{n_3} \sigma_j 
\quad \mbox{and} \quad  \rho_{2}(y)=  |D\phi_3(x)|^{-\frac{n-n_1}{n_3}} \prod_{j=n-n_1+1}^{n_3} \sigma_j. $$\end{proof}

\textbf{Acknowledgments.} We are grateful to Sebastian Herr for valuable discussions. The second
author was supported by a Hausdorff scholarship of the Bonn International Graduate School and
SFB 1060 of the DFG.

\end{document}